\documentclass[12pt,a4paper]{amsart}

%
%

\usepackage{latexsym} 
 
\usepackage[dvips]{graphics}
\usepackage{epsfig}
\usepackage{mathrsfs}
\usepackage{amssymb}
\usepackage{amsthm}
\usepackage{amsfonts}
\usepackage{amsmath}
\usepackage{amstext}
\usepackage{amscd}
\usepackage{tikz}
\usepackage{epic}
\usepackage{eepic}
\usepackage{pstricks,pst-plot}
\usepackage{enumerate}

\setlength{\textwidth}{\paperwidth}
\addtolength{\textwidth}{-6cm}
\setlength{\textheight}{\paperheight}
\addtolength{\textheight}{-5cm}
\addtolength{\textheight}{-\headheight}
\addtolength{\textheight}{-\headsep}
\addtolength{\textheight}{-\footskip}
\setlength{\oddsidemargin}{0.5cm}
\setlength{\evensidemargin}{0.5cm}
\setlength{\topmargin}{-0.5cm}

\numberwithin{equation}{section}

\theoremstyle{plain}
\newtheorem{thm}{Theorem}[section] 
\newtheorem{prop}[thm]{Proposition}
\newtheorem{cor}[thm]{Corollary}
\newtheorem{lem}[thm]{Lemma}

\newtheorem{theorem*}{Theorem}[]

\theoremstyle{definition}
\newtheorem{defn}[thm]{Definition}

\newtheorem{example}[thm]{Example}

\theoremstyle{remark}
\newtheorem{rem}[thm]{Remark}

\newcommand{\N}{\mathbb{N}}
\newcommand{\R}{\mathbb{R}}

\def\accentclass@{7}
\def\makeacc@#1#2{\def#1{\mathaccent"\accentclass@#2 }}
\makeacc@\cir{017}


\title[biLipschitz homeomorphism]
{Notes on (SSP) sets}

\author{Satoshi Koike and Laurentiu Paunescu}
\dedicatory{}

\address{Department of Mathematics, Hyogo University of Teacher Education,
Kato, Hyogo 673-1494, Japan}
\email{koike@hyogo-u.ac.jp} 
\address{School of Mathematics, University of Sydney, Sydney, NSW, 2006,
Australia}
\email{laurent@maths.usyd.edu.au}
\subjclass[2000]{Primary
14P15, 32B20
Secondary
57R45}

\keywords{direction set, sequence selection property, bi-Lipschitz homeomorphism.}
\date{\today}

\begin{document}

\thanks{}


\begin{abstract}

In \cite{kp4} we investigate the directional behaviour of bi-Lipschitz homeomorphisms $h : (\R^n,0) \to (\R^n,0)$ 
for which there exist  the limits $\lim_{n\to \infty} nh(\frac{x}{n})$, denoted by $\overline{h}(x)$. 
The existence of  such $\overline{h}(x)$ makes trivial to see that $\overline{h}(D(A)) = D(h(A))$ 
for arbitrary set-germs $A$ at $0 \in \R^n$.

Recently, J. Edson Sampaio made the remarkable observation
(\cite{sampaio}) that we always can assume the existence of a subsequence $n_i\in \N$, 
such that $\lim_{n_i\to \infty} n_ih(\frac{x}{n_i})=dh(x)$ (in his notation) and this $dh$, although not so strong as $\overline{h}$, 
behaves as well directional-wise for subanalytic sets.
He uses this fact to show that bi-Lipschitz homeomorphic subanalytic sets have bi-Lipschitz homeomorphic tangent cones.

The purpose of this note is to show that Sampaio's $dh$ works as well for  (SSP) sets, that is, the above result is 
characteristic  for (SSP) sets,  a much wider class. 
In particular, we show that the transversality between (SSP) sets is preserved
under bi-Lipschitz homeomorphisms (see \ref{trans}).

\end{abstract}

\maketitle
\bigskip
\section{Introduction.}\label{introduction}

In \cite{kp1}  we proved that the dimension of the common direction set of two subanalytic subsets is a bi-Lipschitz invariant.
In proving that, we introduced and  essentially used the notion of  sequence selection
property, denoted by $(SSP)$ for short. 
Subsequently we have published three more papers \cite{kp2}, \cite{kp3} and \cite{kp4}, where we proved
essential directional properties of sets satisfying 
$(SSP)$ with respect to bi-Lipschitz homeomorphisms.
For instance we proved the transversality theorem in the singular case
and two types of $(SSP)$ structure preserving theorems (\cite{kp3}), and
we introduced the notion of directional homeomorphism,
proving a unified $(SSP)$ structure preserving theorem with 
directional homeomorphisms (\cite{kp4}).

In this note, using Sampoio's idea, we generalise his main result 
in \cite{sampaio} and the aforementioned main result in \cite{kp1} 
to the case of the $(SSP)$ setting. Although the proofs are in the spirit of \cite{kp4} , basically the same as in \cite{sampaio} (at times even simpler), due to the wide potential applications, we believe that it is still worth mentioning this generalisation.

We describe both the notions and notations necessary for this topic and our
results in the $(SSP)$ setting and some numerical properties of $(SSP)$
in \S 2.
In \S 3 we describe the main results in this note
and give their proofs.

\vspace{3mm}




\bigskip


\section{Directional Properties of Sets}\label{directional property}

In this section we recall the notions of direction set 
and sequence selection property, and also
several elementary properties concerning $(SSP)$.

\subsection{Direction set}\label{direction set}

Let us recall the notion of direction set.

\begin{defn}\label{directionset}
Let $A$ be a set-germ at $0 \in \R^n$ such that
$0 \in \overline{A}$.
We define the {\em direction set} $D(A)$ of $A$ at $0 \in \R^n$ by
$$
D(A) := \{a \in S^{n-1} \ | \
\exists  \{ x_i \} \subset A \setminus \{ 0 \} ,
\ x_i \to 0 \in \R^n  \ \text{s.t.} \
{\frac{x_i}{ \| x_i \| }} \to a, \ i \to \infty \}.
$$
Here $S^{n-1}$ denotes the unit sphere centred at $0 \in \R^n$.
\end{defn}

For a subset $A \subset S^{n-1}$, we denote by $L(A)$
a half-cone of $A$ with the origin $0 \in \R^n$ as the vertex:
$$
L(A) := \{ t a \in \R^n\ | \ a \in A, \ t \ge 0 \}.
$$
For a set-germ $A$ at $0 \in \R^n$ such that $0 \in \overline{A}$, 
we put $LD(A) := L(D(A))$, and call it the {\em real tangent cone}
of $A$ at $0 \in \R^n$.



\subsection{Sequence selection property}
\label{condition (SSP)}

Let us recall the notion of condition  $(SSP)$.

\begin{defn}\label{SSP}
Let $A$ be a set-germ at $0 \in \R^n$
such that $0 \in \overline{A}$.
We say that $A$ satisfies {\em condition} $(SSP)$,
if for any sequence of points $\{ a_m \}$ of $\R^n$
tending to $0 \in \R^n,$ such that 
$\lim_{m \to \infty} \frac{a_m}{\| a_m \| } \in D(A)$,
there is a sequence of points $\{ b_m \} \subset A$ such that,
$$
\| a_m - b_m \| \ll \| a_m \|, \ \| b_m \| , 
$$ 
i.e. $\lim_{m \to \infty}{\frac{\| a_m - b_m \|}{ \| a_m \|} } = 0$.  
\end{defn}



Below we give several general examples of sets satisfying condition (SSP), to illustrate the richness of this class.
Consult \cite{kp3} for more concrete and general examples.

\begin{example}\label{SSP}

\begin{enumerate}[(1)]
\item 
Let $a_m := {\frac{1}{ m}} \in \R$, $m \in \N$, and set $A := \{ a_m \} \subset \R$.
Then $0 \in \overline{A}$ and 
$A$ satisfies condition $(SSP)$.

Let $A \subseteq \R^n$ be a set-germ at $0 \in \R^n$
such that $0 \in \overline{A}$, then the following hold:

\item The cone $LD(A)$ satisfies condition $(SSP)$,

\item If $A$ is subanalytic or definable in some
o-minimal structure, then it satisfies condition $(SSP)$.
See \cite{hironaka} for the definition of subanalytic,
and see \cite{coste, dries} for the definitions
of definable and o-minimal.

\item If $A$ is a finite union of sets, all of which satisfy 
condition $(SSP)$, then $A$ satisfies condition $(SSP)$.

\item If $A$ is a $C^1$ manifold such that $0 \in A$,
then it satisfies condition $(SSP)$ and $LD(A)=T_0(A)$ 
i.e. the tangent space of $A$ at $0 \in \R^n$
(this is not necessarily true for $C^0$ manifolds or if 
$0\notin A$).

\item Let $\pi :\mathcal M_n \to \R^n$ be the blowing-up at $0\in \R^n$. 
 It is not difficult to produce an example  $B$  which satisfies condition (SSP) and   $\pi(B)=A$ does not necessarily satisfy (SSP). For instance we can take $B=C\cup E,  E=\pi^{-1}(0),   C \cap E=\{a\}$, such that $C$ does not satisfy (SSP) and $LD(C)\subset LD(E )$ at $a$. Then $\pi (B)=\pi(C)$ does not satisfy (SSP), whereas $B$ does satisfy (SSP).

\item
Let us denote by $\ell$ the positive $x$-axis, and by $m$ the half line
defined by $y = cx$, $x \ge 0$, for some $c > 0$.
There are many types of zigzag curves  $B$ having infinitely many oscillations around
$0 \in \R^2$ between $\ell$ and $m$.
Some of them do not satisfy condition $(SSP)$ 
e.g. Example 3.4 in \cite{kp1}, 
where the union of $B$ and $\ell$ consists of similar triangles. See the example below \ref{zigzag} for more on condition (SSP) and zigzags.

 Let $\pi :\mathcal M_2 \to \R^2$ be the blowing-up at $0\in \R^2$. 
Using a local coordinate of $\mathcal M_2$, $\pi$ is expressed by
$\pi (X,Y) = (XY,Y)$.
Let $B$ be as  above, with or without (SSP).
Then we can see that $A := \pi (B)$ is in the region $|x| \le c|y|^2$, $x \ge 0$, $y \ge 0$.
Therefore $LD(A)$ is the positive $y$-axis.
So regardless whether $B$ has (SSP) or not, one can see that  its image  $A=\pi (B)$ satisfies condition (SSP).
Compare to 
(6).
\item Let $0\in \overline {A}\cap \overline {B}$  and assume that $LD(A)\cap LD(B)=\{0\}$. Then $A\cup B$ has (SSP) if and only if both $A$ and $B$ have (SSP).

\end{enumerate}
\end{example}

We have the following criterion for condition $(SSP)$.

\begin{prop}
$A$ satisfies condition $(SSP)$ if and only if $\text{dist}(ta,A) = 0(t),$  
for any direction $a \in DA$.
\end{prop}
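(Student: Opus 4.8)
The plan is to prove both implications directly from the two definitions, so that the whole statement becomes a reformulation of condition $(SSP)$ in the special case of radial sequences, glued back to the general case by the triangle inequality; no structural theory is needed. Throughout I fix a direction $a \in D(A)$ and read the estimate $\dist(ta,A) = o(t)$ as $\lim_{t \to 0^+} \dist(ta,A)/t = 0$, equivalently, $\dist(t_m a, A)/t_m \to 0$ for every scalar sequence $t_m \to 0^+$.

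For the implication that $(SSP)$ forces the distance estimate, I would fix $a \in D(A)$ together with an arbitrary sequence $t_m \to 0^+$, and feed the radial sequence $a_m := t_m a$ into the definition of $(SSP)$. Since $a_m \to 0$ and $a_m/\|a_m\| = a \in D(A)$ for every $m$, condition $(SSP)$ supplies points $b_m \in A$ with $\|a_m - b_m\|/\|a_m\| \to 0$. Because $\dist(t_m a, A) \le \|a_m - b_m\| = \|t_m a - b_m\|$ while $\|a_m\| = t_m$, dividing yields $\dist(t_m a, A)/t_m \to 0$; as $t_m$ was arbitrary this is exactly $\dist(ta,A) = o(t)$.

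For the converse, I would start from an arbitrary sequence $a_m \to 0$ whose normalized directions $v_m := a_m/\|a_m\|$ converge to some $a \in D(A)$, and set $t_m := \|a_m\|$. The hypothesis applied at the single direction $a$ gives $\dist(t_m a, A) = o(t_m)$, so I can choose $b_m \in A$ realizing that infimum up to an error $t_m/m$, whence $\|t_m a - b_m\| = o(t_m)$ as well. The triangle inequality then gives $\|a_m - b_m\| \le \|a_m - t_m a\| + \|t_m a - b_m\| = t_m\|v_m - a\| + o(t_m)$, and since $v_m \to a$ the first term is also $o(t_m) = o(\|a_m\|)$. Dividing by $\|a_m\|$ produces the required $\|a_m - b_m\|/\|a_m\| \to 0$, which is condition $(SSP)$.

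The one genuinely delicate point, and the step I would treat with care, is this last triangle-inequality split: condition $(SSP)$ must be verified for a general approaching sequence $a_m$ whose directions only converge to $a$, whereas the distance hypothesis is stated solely along the exact ray $t \mapsto ta$. The bridging device is to route $a_m - b_m$ through the radial point $t_m a$; the discrepancy $\|a_m - t_m a\| = t_m\|v_m - a\|$ is automatically $o(t_m)$ precisely because $v_m \to a$, so the exact-ray estimate transfers to the nearby sequence. I would also note explicitly that $\dist$ is an infimum, so in the converse $b_m$ is chosen to nearly realize the distance and closedness of $A$ is never invoked.
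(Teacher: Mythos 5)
Your proof is correct. There is, in fact, nothing in the paper to compare it against: the authors state this proposition as a bare criterion with no proof at all, so your argument fills a genuine gap rather than duplicating or diverging from a printed one. The two halves you give are the natural unwinding of the definitions and are surely what the authors had in mind: for the forward direction, feeding the radial sequences $a_m = t_m a$ (which satisfy $a_m/\|a_m\| = a \in D(A)$ identically) into condition $(SSP)$ and invoking the sequential characterization of the limit $\lim_{t\to 0^+}\dist(ta,A)/t = 0$; for the converse, choosing near-minimizers $b_m \in A$ for $\dist(t_m a, A)$ with $t_m = \|a_m\|$ and routing the estimate through the ray point $t_m a$, where the discrepancy $\|a_m - t_m a\| = t_m\|v_m - a\|$ is $o(t_m)$ precisely because the normalized directions converge. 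Two minor remarks. First, the paper's ``$0(t)$'' is a typo for little-o ``$o(t)$''; your reading is the only one that makes the statement true (big-O would be implied by, but not imply, $(SSP)$ --- Example \ref{PBnotSSP} in effect illustrates the difference). Second, Definition \ref{SSP} nominally asks for $\|a_m - b_m\| \ll \|b_m\|$ as well as $\|a_m - b_m\| \ll \|a_m\|$, but the former follows automatically from the latter, since $\|a_m - b_m\|/\|a_m\| \to 0$ forces $\|b_m\|/\|a_m\| \to 1$; this is consistent with the ``i.e.'' clause in the definition, so your verification of the single ratio suffices.
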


\subsection{Numerical properties of sequences of real numbers on (SSP)}

Let us denote by $\mathcal{A}$ the set of strictly decreasing sequences 
$\{ a_m\}$ of positive real numbers tending to $0 \in \R$, namely 
$\{ a_m\}$ satisfies the following:
$$
a_m > a_{m+1} > 0 \ (m \in \N ), \ \ a_m \to 0 \ \text{as} \ m \to \infty .
$$
For an element of $\mathcal{A}$ we have the following criterion
to satisfy condition $(SSP)$.

\begin{lem}\label{SSPcriterion}
For $\{ a_m \} \in \mathcal{A}$, $\{ a_m\}$ satisfies condition $(SSP)$
if and only if
$$
\frac{a_m}{a_{m+1}} \to 1 \ \text{as} \ m \to \infty .
$$
\end{lem}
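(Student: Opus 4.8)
The plan is to reduce the statement to the distance criterion for $(SSP)$ recorded in the Proposition above and then carry out an elementary gap-by-gap estimate of the distance function. First I would observe that since every $a_m$ is a positive real number, each ratio $a_m/\|a_m\|$ equals $1\in S^0$, so the direction set is the single point $D(A)=\{1\}$. By the Proposition applied with $a=1$, the germ $A$ satisfies $(SSP)$ if and only if $\operatorname{dist}(t,A)=\smallo(t)$ as $t\to 0^{+}$, i.e. $\operatorname{dist}(t,A)/t\to 0$. Everything then reduces to estimating the distance from a small positive $t$ to the set $\{a_m\}$.

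Next I would compute this distance on each gap between consecutive terms. Fix $m$ and let $t\in(a_{m+1},a_m)$; since $\{a_m\}$ is strictly decreasing, the two nearest elements of $A$ are exactly $a_m$ and $a_{m+1}$, so
$$
\operatorname{dist}(t,A)=\min\{\,t-a_{m+1},\,a_m-t\,\}.
$$
On the left half of the gap the ratio equals $1-a_{m+1}/t$, which is increasing, and on the right half it equals $a_m/t-1$, which is decreasing; hence the ratio $\operatorname{dist}(t,A)/t$ attains its maximum over the gap at the midpoint $t=\frac{1}{2}(a_m+a_{m+1})$, with value
$$
r_m:=\frac{a_m-a_{m+1}}{a_m+a_{m+1}}.
$$
Since the ratio vanishes at $t=a_m$ and tends to $0$ at both endpoints, this shows $\sup_{t\in(a_{m+1},a_m]}\operatorname{dist}(t,A)/t=r_m$.

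Finally I would translate the convergence condition. Writing $q_m:=a_m/a_{m+1}>1$ we have $r_m=(q_m-1)/(q_m+1)$, and $q\mapsto(q-1)/(q+1)$ is a strictly increasing homeomorphism of $[1,\infty)$ onto $[0,1)$ sending $1$ to $0$; hence $r_m\to 0$ if and only if $q_m\to 1$, i.e. $a_m/a_{m+1}\to 1$. Because every small $t>0$ lies in a unique gap $(a_{m+1},a_m]$ with $m=m(t)\to\infty$ as $t\to0^{+}$, one has $\limsup_{t\to0^{+}}\operatorname{dist}(t,A)/t=\limsup_{m\to\infty}r_m$, so $\operatorname{dist}(t,A)/t\to0$ is exactly $r_m\to0$. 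Chaining the three steps gives the claimed equivalence.

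The main obstacle I anticipate is bookkeeping rather than conceptual: one must be sure the per-gap suprema $r_m$ genuinely govern the limit of $\operatorname{dist}(t,A)/t$ over all $t\to0^{+}$, i.e. that the ratio cannot spike in the interior of a gap while staying small at its endpoints (this is precisely why locating the interior maximum at the midpoint is needed), and that the finitely many initial terms, together with the points where $\operatorname{dist}(t,A)=0$, cause no trouble. Should one wish to avoid the Proposition, the same estimates run directly from the definition of $(SSP)$: for the ``if'' direction take $b_m$ to be whichever neighbouring point is closer to $t$, and for the ``only if'' direction test $(SSP)$ against the sequence of gap-midpoints to produce a contradiction whenever $q_m\not\to1$.
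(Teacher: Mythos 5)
Your proposal is correct and follows essentially the same route as the paper: the paper's one-line proof asserts that $(SSP)$ for $\{a_m\}$ is equivalent to the midpoint condition $\|a_{m+1}-\frac{a_{m+1}+a_m}{2}\|\ll\|a_{m+1}\|$ and then unwinds this to the ratio condition, which is precisely the content of your gap-by-gap maximization showing the supremum of $\operatorname{dist}(t,A)/t$ on each gap occurs at the midpoint with value $r_m=(q_m-1)/(q_m+1)$. Your version, routed through the distance-criterion Proposition, merely supplies the rigorous justification that the paper's asserted equivalence leaves implicit.
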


\begin{proof}
Since $\{ a_m \} \in \mathcal{A}$, $\{ a_m\}$ satisfies condition $(SSP)$
if and only if
$$
\| a_{m+1} - \frac{a_{m+1} + a_m}{2} \| \ll \| a_{m+1} \| .
$$
This condition is equivalent to
$$
1 - \frac{a_m}{a_{m+1}} \to 0 \ \text{as} \ m \to \infty .
$$
Therefore the statement follows.
\end{proof}

We set
$$
\mathcal{A}_{SSP} := \{ \{ a_m \} \in \mathcal{A} \ | \
\{ a_m \} \ \text{satisfies condition} \ (SSP) \ \text{at} \ 0 \in \R \} ,
$$
and define the summation $``+"$ and multiplication $``\cdot"$ for elements 
of $\mathcal{A}_{SSP}$ as follows:
$$
\{ a_m \} + \{ b_m \} := \{ a_m + b_m \} , \ \
\{ a_m \} \cdot \{ b_m \} := \{ a_m b_m \} .
$$
Then, as a corollary of the above lemma, we have the following
properties on $\mathcal{A}_{SSP}$.

\begin{cor}\label{operation}
$\mathcal{A}_{SSP}$ is closed under the summation $``+"$ and multiplication 
$``\cdot"$.
Therefore $\mathcal{A}_{SSP}$ is a semigroup with respect to the $``+"$ and
$``\cdot"$.
\end{cor}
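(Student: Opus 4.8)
The plan is to reduce both closure claims to the ratio criterion of Lemma \ref{SSPcriterion}. Fix $\{a_m\},\{b_m\}\in\mathcal{A}_{SSP}$; by that lemma we have $\frac{a_m}{a_{m+1}}\to 1$ and $\frac{b_m}{b_{m+1}}\to 1$. I would first check that the two candidate sequences lie in $\mathcal{A}$ at all. Indeed $\{a_m+b_m\}$ and $\{a_m b_m\}$ have positive terms; from $a_m>a_{m+1}$ and $b_m>b_{m+1}$ one gets $a_m+b_m>a_{m+1}+b_{m+1}$ and $a_mb_m>a_{m+1}b_{m+1}$, so both are strictly decreasing, and both clearly tend to $0$. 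Hence both belong to $\mathcal{A}$, and it only remains to test the $(SSP)$ ratio condition.

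For multiplication the verification is a one-line computation using multiplicativity of limits:
$$
\frac{a_m b_m}{a_{m+1} b_{m+1}}=\frac{a_m}{a_{m+1}}\cdot\frac{b_m}{b_{m+1}}\longrightarrow 1\cdot 1=1,
$$
so $\{a_m b_m\}\in\mathcal{A}_{SSP}$ by Lemma \ref{SSPcriterion}. For addition I would invoke the elementary mediant inequality, valid since all terms are positive,
$$
\min\!\left(\frac{a_m}{a_{m+1}},\frac{b_m}{b_{m+1}}\right)\le\frac{a_m+b_m}{a_{m+1}+b_{m+1}}\le\max\!\left(\frac{a_m}{a_{m+1}},\frac{b_m}{b_{m+1}}\right).
$$
Both outer expressions converge to $1$, so the squeeze theorem forces $\frac{a_m+b_m}{a_{m+1}+b_{m+1}}\to 1$, giving $\{a_m+b_m\}\in\mathcal{A}_{SSP}$. (Equivalently, for $\varepsilon>0$ one has $a_m\le(1+\varepsilon)a_{m+1}$ and $b_m\le(1+\varepsilon)b_{m+1}$ for large $m$, hence $a_m+b_m\le(1+\varepsilon)(a_{m+1}+b_{m+1})$.)

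The semigroup assertion is then automatic: associativity of $``+"$ and $``\cdot"$ on $\mathcal{A}_{SSP}$ is inherited termwise from the corresponding operations on $\R$, and closure is exactly what was just established. Honestly, there is no real obstacle here; the entire content has already been packaged into the ratio reformulation of Lemma \ref{SSPcriterion}, after which each operation reduces to a standard limit fact. The only spot demanding more than a direct multiplicative split is the additive case, where the ratio of sums must be controlled by the mediant inequality (or the $\varepsilon$-estimate above) rather than factored, and even that is elementary.
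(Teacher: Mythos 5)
Your proof is correct and follows exactly the route the paper intends: the paper states this result without proof as an immediate consequence of Lemma \ref{SSPcriterion}, and your argument (ratio of products splits multiplicatively; ratio of sums handled by the mediant/squeeze estimate) is precisely the omitted verification. The only addition beyond the paper's implicit argument is your careful check that the sum and product sequences remain in $\mathcal{A}$, which is a worthwhile detail to record.
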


Let us define the notion of polynomial boundedness for an element 
of $\mathcal{A}$ (cf. M. Coste \cite{coste}, L. van den Dries \cite{dries}).

\begin{defn}\label{polynomiallybounded}
Let $\{ a_m \} \in \mathcal{A}$.
We say that $\{ a_m \}$ is {\em polynomially bounded},
if there exists a positive integer $k \in \N$ such that
$a_m \ge \frac{1}{m^k}$ for any $m \ge 2$.
\end{defn} 

\begin{example}\label{logarithm}
Let $a_m : = \log \frac{m + 1}{m}$ for $m \in \N$.
Then we have $\{ a_m \} \in \mathcal{A}$.
Using Lemma \ref{SSPcriterion}, we can easily see that
$\{ a_m \}$ satisfies condition $(SSP)$.
On the other hand, we  have
$$
a_m = 1 + \frac{1}{m} \ge \frac{1}{m^2} \ \ \text{for} \ \ m \ge 2 .
$$
Therefore $\{ a_m \}$ is also polynomially bounded.
\end{example}

We next discuss the relationship between condition $(SSP)$
and polynomial boundedness.

\begin{example}\label{SSPnotPB}
Let $a_m := \frac{1}{m^{\sqrt{m}}}$ for $m \in \N$.
Then we have $\{ a_m \} \in \mathcal{A}$.
Let us compute $\frac{a_m}{a_{m+1}}$:

\begin{eqnarray*}
\frac{a_m}{a_{m+1}} &=& \frac{(m + 1)^{\sqrt{m+1}}}{m^{\sqrt{m}}} \\
&=& (\frac{m + 1}{m})^{\sqrt{m}} (m + 1)^{\sqrt{m+1} - \sqrt{m}} \\
&=& [(1 + \frac{1}{m})^m]^{\frac{1}{\sqrt{m}}} 
(m + 1)^{\frac{1}{\sqrt{m+1} + \sqrt{m}}}.
\end{eqnarray*}
Then we can easily see that
$$
[(1 + \frac{1}{m})^m]^{\frac{1}{\sqrt{m}}} , \ 
(m + 1)^{\frac{1}{\sqrt{m+1} + \sqrt{m}}} \to 1 \ \
\text{as} \ m \to \infty .
$$
Therefore it follows from Lemma \ref{SSPcriterion} that
$\{ a_m \}$ satisfies condition $(SSP)$.
On the other hand, $\{ a_m \}$ is not polynomially bounded.
\end{example}

The example above shows that condition $(SSP)$ does not always imply
polynomially bounded.
On  the other hand, we have the following example
concerning the opposite implication.

\begin{example}\label{PBnotSSP}
Let us choose  a sequence $m_i$, $i\in \N$, such that 
$\frac{ m_i}{m_{i+1} }\to \infty,$ (for instance $m_i=i^i$) and define 
the following sequence:
$a_{m_i}=\frac{1}{m_i^2}, \ a_{m_{i+1}-1}=\frac{1}{(m_i^2+m_i)}$ 
and in between them complete with any decreasing sequence.
Then we  have  
$$
\frac{1}{(m_i+j)}>\frac{1}{m_i^2} \geq a_{m_i+j}\geq \frac{1}{(m_i^2+m_i)}
>\frac{1}{(m_i+j)^2}, \ j=1,..., m_{i+1}-m_i-1.
$$
Therefore we get a decreasing sequence $\{ a_m \} \in \mathcal{A}$ such that
$\frac{1}{m^2} \leq a_m <\frac{1}{m}.$
It follows that $\{ a_m \}$ is polynomially bounded. 
But the ratio $\frac{a_m}{a_{m+1}}$ does not tend to 1 (for instance 
$\frac{a_{m_{i+1}-1}}{a_{m_{i+1}}}\to 0$), 
therefore $\{ a_m \}$ does not satisfy condition $(SSP)$!
\end{example}

The last example raises a very interesting question about the density of 
decreasing sequences so perhaps one has to modify  the definition of 
$(SSP)$ a little to include the polynomially bounded sequences as a subset.

\bigskip


\section{Main results}\label{mainresults}

In this section we describe our main results.
We first make some remarks on Lipschitz extension methods. 

\begin{rem}\label{WhitneyBanach}
We we can always construct 
a global Lipschitz extension of a given Lipschitz mapping  
$f:A \to \R^n, A \subset (X,d)$ to $\tilde f :X\to \R^n$.
Indeed, for a Lipschitz function with constant $L, \
f : A \to \R, \ A \subset X$, $A$ endowed with the induced 
metric from $(X,d)$, we have an extension formula 
(see H. Whitney \cite{Wh} or S. Banach \cite{banach}):

$$
\alpha(x):=\inf_{a\in A}(f(a)+Ld(x,a)).
$$
Similarly one can extend it by 
$$
\beta(x):=\sup_{a\in A}(f(a)-Ld(x,a)).
$$ 
This construction can be used to extend Lipschitz maps as well, however, 
without preserving the Lipschitz constant.
\end{rem}

\begin{rem}\label{doubling}
For a given Lipschitz mapping $f:\R^n\to\R^n$ we can associate the  bi-Lipschitz mappings 
$Y_+(f):\R^n\times\R^n\to \R^n\times\R^n$ defined by $Y_+(f)(x,y):=(x,y+f(x))$ and 
$Y_-(f):\R^n\times\R^n\to \R^n\times\R^n$ defined by $Y_-(f)(x,y):=(x+f(y),y)$.

Given a bi-Lipschitz mapping $\phi:A\to B, A,B\subset \R^n$, we can extend both 
$\phi$ and $\phi^{-1}$ to $\R^n$, say to global Lipschitz mappings $\tilde \phi$ and  $\tilde \phi^{-1}$, 
and then consider the corresponding bi-Lipschitz mappings $Y_+, Y_-$. 

We then consider
the globally defined bi-Lipschitz
$\tilde {\tilde \phi}:=Y_-(\tilde \phi^{-1})^{-1}\circ Y_+(\tilde \phi)$  and note that 
$\tilde {\tilde \phi}(x,0)=(0,\phi(x)), \forall x\in A$.  

In other words, in considering 
the direction cones  of $A$ and $B$  we may assume that $\phi:A\to B$, $A,B\subset \R^n$, is defined globally, see \cite{sampaio}. 
This is a standard way of creating bi-Lipschitz mappings, 
we call it the {\em doubling process}.
\end{rem}


\begin{rem}\label{der}
Given a bi-Lipschitz homeomorphism $\psi:\R^n \to \R^n$, 
one can consider $\psi_n(x)=n\psi(\frac{x}{n})$ and observe that in a compact neighbourhood of the origin 
we can, via Arzela-Ascoli's theorem, claim the existence of a limit $\psi_{n_i}\to d\psi$ (see \cite{sampaio}). 
This will be bi-Lipschitz as well with the same constants. 
\end{rem}
We have the following lemma in the $(SSP)$ setting.

\begin{lem}\label{mainlemma}
Let $A$, $B \subset \R^n$ set-germs at $0 \in \R^n$ such that $0 \in \overline{A} \cap \overline{B}$,
and let $\phi : A \to B$ be a bi-Lipschitz homeomorphism. 
If $A$ satisfies condition $(SSP)$, then $d\tilde {\tilde \phi}(LD(A)) \subset LD(B)$.
\end{lem}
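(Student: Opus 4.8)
The plan is to transfer the problem to the globally defined bi-Lipschitz homeomorphism $\psi := \tilde{\tilde\phi}$ of $\R^n \times \R^n$ furnished by the doubling process of Remark \ref{doubling}, which satisfies $\psi(x,0) = (0,\phi(x))$ for every $x \in A$ and $\psi(0) = 0$. Under the identifications $LD(A) \cong LD(A)\times\{0\}$ and $LD(B) \cong \{0\}\times LD(B)$, and writing $d\psi = \lim_{i} n_i\,\psi(\,\cdot\,/n_i)$ for the tangent map along the subsequence $n_i$ produced in Remark \ref{der}, the assertion to prove reads $d\psi(v,0) \in \{0\}\times LD(B)$ for every $v \in LD(A)$. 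The case $v = 0$ is immediate since $d\psi(0) = 0$, so I fix $v \in LD(A)\setminus\{0\}$ and set $a := v/\|v\| \in D(A)$; note that running the argument for general $v$ sidesteps any appeal to homogeneity of $d\psi$.

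The one and only place where condition $(SSP)$ intervenes is the following. Since $A$ is in general not a cone, the scaled point $v/n_i$ need not lie in $A$, and we cannot simply write $\psi(v/n_i,0) = (0,\phi(v/n_i))$. To bypass this, I apply condition $(SSP)$ to the sequence $v/n_i \to 0$, whose normalized directions are the constant $a \in D(A)$: it produces points $c_i \in A$ with $\|v/n_i - c_i\|/\|v/n_i\| \to 0$, equivalently $n_i\|v/n_i - c_i\| \to 0$. For these points $\psi(c_i,0) = (0,\phi(c_i))$, and the global Lipschitz bound $L$ for $\psi$ gives
$$
n_i\,\bigl\|\psi(v/n_i,0) - \psi(c_i,0)\bigr\| \;\le\; L\,n_i\|v/n_i - c_i\| \;\longrightarrow\; 0 .
$$
Consequently $d\psi(v,0) = \lim_i n_i\,\psi(v/n_i,0) = \lim_i (0,\,n_i\phi(c_i)) = (0,w)$, where $w := \lim_i n_i\phi(c_i)$, the limit existing because $n_i\psi(c_i,0) \to d\psi(v,0)$.

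It remains to identify $w$ as an element of $LD(B)$. Because $d\psi$ is bi-Lipschitz with the same constants as $\psi$ and fixes the origin (Remark \ref{der}), the nonzero vector $(v,0)$ has $d\psi(v,0) = (0,w) \neq 0$, so $w \neq 0$; in particular $n_i\|\phi(c_i)\| \to \|w\| \neq 0$, whence $\phi(c_i)\in B\setminus\{0\}$, $\phi(c_i)\to 0$, and $\phi(c_i)/\|\phi(c_i)\| \to w/\|w\|$. By the very definition of the direction set this gives $w/\|w\| \in D(B)$, hence $w \in LD(B)$ and $d\psi(v,0) \in \{0\}\times LD(B)$, as desired.

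The step I expect to be most delicate is the bookkeeping of scales and subsequences in the middle paragraph: the tangent map $d\psi$ is pinned to the fixed Arzel\`a--Ascoli sequence $n_i$, so condition $(SSP)$ must be invoked along that same sequence, and one must check that the approximation error, which is $o(1/n_i)$, remains negligible after multiplication by $n_i$ and therefore genuinely disappears in the blow-up limit while still delivering a bona fide direction of $B$. Everything else is a routine consequence of the Lipschitz estimates and the definitions of $D(\,\cdot\,)$ and $LD(\,\cdot\,)$.
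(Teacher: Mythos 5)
Your proof is correct and takes essentially the same route as the paper's: the doubling process of Remark \ref{doubling}, the Arzel\`a--Ascoli subsequence limit of Remark \ref{der}, the $(SSP)$ approximation of $v/n_i$ by actual points of $A$, and the Lipschitz estimate showing the two blown-up sequences have the same limit. The only difference is that you spell out the final identification of that limit as a point of $LD(B)$ (including the cases $v=0$ and $w\neq 0$), a step the paper compresses into ``it follows that $d\phi(v)\in LD(B)$'' with a reference to \cite{sampaio}.
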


\begin{proof} 
By remark \ref{doubling} we may assume that $\phi$ is global and by remark \ref{der} 
we can consider the associated $d\phi=\lim_{i\to\infty}\phi_{n_i}$.
Take an arbitrary $v\in LD(A)$.
Since $A$ satisfies condition $(SSP)$,  there is a sequence of points  $v_i \in A$, $i \in \N$, such that 
$$
\|v_i-\frac{v}{n_i}\| << \frac{1}{n_i}\thickapprox \|v_i\| .
$$ 
Accordingly we have 
$$
\|\phi(v_i)-\phi(\frac{v}{n_i})\|<<\frac{1}{n_i},
$$
which in turn shows that 
$$
\|n_i\phi(v_i)-n_i\phi(\frac{v}{n_i})\|\to 0 \ \ \text{as} \ \  i\to \infty.
$$ 
It follows that $d\phi(v)\in LD(B)$ as claimed (see \cite{sampaio}).
\end{proof}

\begin{rem}
In fact one can prove the following.  Let $A$, $B \subset \R^n$ be set-germs at $0 \in \R^n$ 
such that $0 \in \overline{A} \cap \overline{B}$, and let $\phi : (\R^n,A ,0)\to (\R^n,B,0)$ 
be a Lipschitz mapping-germ.
If $A$ satisfies condition $(SSP)$, then $d\phi (LD(A)) \subset LD(B)$. 
Here $d\phi$ is merely Lipschitz.
\end{rem}

As a corollary of the above lemma, we have the generalised result 
of Theorem 3.2 in \cite{sampaio} to the case in the $(SSP)$ setting.

\begin{thm}\label{Tcone}
Let $A$, $B \subset \R^n$ be set-germs at $0 \in \R^n$ such that $0 \in \overline{A} \cap \overline{B}$,
and let $\phi : A \to B$ be a bi-Lipschitz homeomorphism. 
If both $A, B$ satisfy condition $(SSP)$, then $d\tilde {\tilde \phi}(LD(A)) = LD(B)$.
\end{thm}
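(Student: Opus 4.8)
The plan is to derive the equality from Lemma \ref{mainlemma} by applying it twice, once to $\phi$ and once to its inverse, exploiting that the hypotheses are symmetric in $A$ and $B$ (both satisfy $(SSP)$) and that $\phi:A\to B$ being a bi-Lipschitz homeomorphism forces $\phi^{-1}:B\to A$ to be bi-Lipschitz as well. As in the proof of Lemma \ref{mainlemma}, I would first invoke remarks \ref{doubling} and \ref{der} to reduce to a globally defined bi-Lipschitz $\phi$ and fix, along a chosen subsequence $n_i$, the limit $d\phi=\lim_{i\to\infty}\phi_{n_i}$, which we identify with $d\tilde{\tilde{\phi}}$ via the doubling. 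Since $A$ satisfies condition $(SSP)$, Lemma \ref{mainlemma} immediately gives the inclusion $d\phi(LD(A))\subset LD(B)$.

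The crux is to obtain the reverse inclusion by applying Lemma \ref{mainlemma} to $\phi^{-1}$, and for this I must verify that the derivative of the inverse is the inverse of the derivative, $d(\phi^{-1})=(d\phi)^{-1}$, realized along the \emph{same} subsequence so that the lemma applies verbatim. The key algebraic identity is $(\phi_n)^{-1}=(\phi^{-1})_n$ for every $n$: writing $\phi_n(x)=n\phi(x/n)$, the equation $n\phi(x/n)=y$ is equivalent to $x=n\phi^{-1}(y/n)$. Because the maps $\phi_{n_i}$ all share the bi-Lipschitz constant of $\phi$, their inverses $(\phi^{-1})_{n_i}$ are equi-bi-Lipschitz; by Arzela-Ascoli (remark \ref{der}) they converge after passing to a further subsequence, and the standard fact that a uniform-on-compacta limit of equi-bi-Lipschitz homeomorphisms has inverse equal to the limit of the inverses yields $(\phi^{-1})_{n_i}\to(d\phi)^{-1}$. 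Thus $(d\phi)^{-1}$ is a legitimate choice of $d(\phi^{-1})$, and since $B$ satisfies $(SSP)$, Lemma \ref{mainlemma} applied to $\phi^{-1}:B\to A$ gives $(d\phi)^{-1}(LD(B))\subset LD(A)$.

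It then remains only to combine the two inclusions: applying the bijection $d\phi$ to $(d\phi)^{-1}(LD(B))\subset LD(A)$ produces $LD(B)\subset d\phi(LD(A))$, which together with $d\phi(LD(A))\subset LD(B)$ from the first step forces $d\phi(LD(A))=LD(B)$, i.e. $d\tilde{\tilde{\phi}}(LD(A))=LD(B)$, as required. I expect the main obstacle to be the second paragraph, namely the interchange of ``taking the inverse'' and ``taking the subsequential limit,'' together with the care needed to present $(d\phi)^{-1}$ as an admissible $d(\phi^{-1})$ along a common subsequence; everything else is a formal combination of the two inclusions.
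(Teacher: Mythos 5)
Your proposal is correct and is essentially the paper's own argument: the paper states Theorem \ref{Tcone} simply as a corollary of Lemma \ref{mainlemma}, the intended (and Sampaio's original) route being exactly the symmetric application of that lemma to $\phi$ and to $\phi^{-1}$. Your second paragraph, verifying $(\phi_n)^{-1}=(\phi^{-1})_n$ and hence that $(d\phi)^{-1}$ is an admissible $d(\phi^{-1})$ along the same subsequence, is the detail the paper leaves implicit, and you fill it in correctly.
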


\begin{example}\label{zigzag}
Let $f : (\R ,0) \to (\R, 0)$ be a continuous zigzag function like in  Figure $1$,
whose graph has infinitely many oscillations around $0 \in \R^2$
between the positive $x$-axis $\ell$ and the half line $m$ 
defined by $y = cx$, $x \ge 0$, for some $c > 0$.

\begin{figure}[htb]
\centering
\includegraphics[width=.3\linewidth]{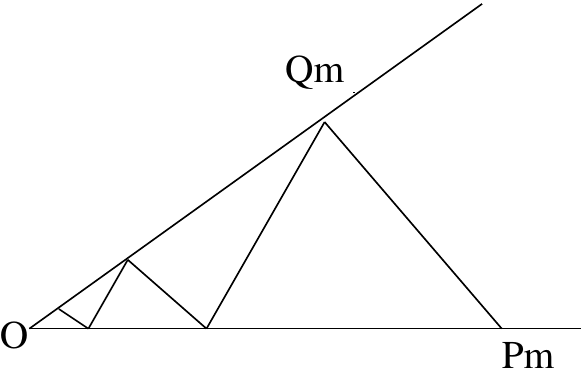}
\hfil
\caption{}
\label{fig:zigzag}
\end{figure}

Now we define the mapping $\phi : (\R^2 ,0) \to (\R^2 ,0)$ by
$$
\phi (x,y) =Y_+(f)= (x,y + f(x)).
$$
Then $\phi$ is a homeomorphism.
Let us remark that $\ell$ satisfies condition $(SSP)$,
$LD(\ell ) = \ell$ and $LD(\phi (\ell ))$ is the sector
surrounded by $\ell$ and $m$ with $0 \in \R^2$ as the vertex.
Therefore, by Theorem \ref{Tcone}, we can see the following property:

\vspace{3mm}

{\em If the zigzag $\phi (\ell )$ satisfies condition $(SSP)$,
then $\phi$ cannot be bi-Lipschitz (i.e. $f$ cannot be Lipschitz).
In other words, if $\phi$ is a bi-Lipschitz homeomorphism ($f$ is Lipschitz),
then the zigzag $\phi (\ell )$ does not satisfy condition $(SSP)$.}

\vspace{3mm}
 
The above property follows also from some directional property
of intersection set (Proposition 2.29 and Appendix in \cite{kp3})
or an important property concerning $LD(h(A)) = LD(h(LD(A)))$ in \cite{kp1}.
\end{example}

Using Theorem \ref{Tcone} , we can show the following corollaries.

\begin{cor}\label{corollary}
Let $A$ be a set germ at $0 \in \R^n$ such that $0\in \overline{A\setminus \{ 0 \} }$,
and let $0 \in \R^n$ have a neighbourhood in $A$ bi-Lipschitz homeomorphic to an open set 
in some Euclidean space $\R^k$.
Then $LD(A)$ is bi-Lipschitz homeomorphic to $\R^k$.
\end{cor}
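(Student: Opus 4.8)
The plan is to deduce the statement from Theorem \ref{Tcone}, taking the given open set as the second germ. First I would name the bi-Lipschitz homeomorphism $\phi$ from a neighbourhood of $0$ in $A$ onto an open set $U \subset \R^k$ and set $p := \phi(0)$. Since bi-Lipschitz maps preserve Hausdorff dimension we have $k \le n$, so I would regard $\R^k$ as the coordinate subspace $\R^k \times \{0\} \subset \R^n$ and view $U$ inside $\R^n$; post-composing with this isometric embedding keeps $\phi$ bi-Lipschitz. Translating the target by $-p$ (again an isometry), I may assume $\phi(0)=0$, so that $U$ becomes an open neighbourhood of the origin in the $k$-plane $\R^k \subset \R^n$. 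As an open subset of a $k$-plane through $0$, $U$ is a $C^1$ manifold with $0 \in U$, hence by Example \ref{SSP} it satisfies $(SSP)$ and $LD(U) = T_0(U) = \R^k$.

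With $B := U$ this puts me exactly in the hypotheses of Theorem \ref{Tcone}: $\phi : A \to U$ is a bi-Lipschitz homeomorphism of germs at $0$ with both germs satisfying $(SSP)$. The theorem then gives
$$
d\tilde{\tilde\phi}\bigl(LD(A)\bigr) = LD(U) = \R^k .
$$
To finish, I would invoke Remark \ref{der}: the map $d\tilde{\tilde\phi}$ is a globally defined bi-Lipschitz homeomorphism of $\R^n$ (with the same constants as $\phi$), so its restriction to $LD(A)$ is automatically a bi-Lipschitz homeomorphism onto its image $\R^k$. This produces the desired bi-Lipschitz homeomorphism $LD(A) \cong \R^k$.

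The only delicate point — and the one I expect to be the real obstacle — is verifying the hypotheses of Theorem \ref{Tcone}, which demand $(SSP)$ for \emph{both} germs. For $U$ this comes for free from openness, as above; for $A$ it is indispensable. Using Lemma \ref{mainlemma} on $\phi^{-1}$ alone yields only the inclusion $d\tilde{\tilde\phi}^{-1}(\R^k) \subset LD(A)$, so that $LD(A)$ merely contains a bi-Lipschitz copy of $\R^k$; the reverse inclusion, needed for equality, is exactly what $(SSP)$ for $A$ furnishes (and it even forces $\dim LD(A) = k$). That this cannot be waived is shown by the zigzag of Example \ref{zigzag}, which is bi-Lipschitz to an open interval yet has a two-dimensional tangent cone: the crux of the argument is therefore to guarantee that the source germ $A$ itself satisfies $(SSP)$, as it does for the subanalytic or definable germs to which the corollary is meant to apply.
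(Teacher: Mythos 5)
Your mechanics coincide exactly with the paper's own proof, which consists of two sentences: by Example \ref{SSP} (5) the open set $U$ satisfies $(SSP)$ with $LD(U)=\R^k$, and then Theorem \ref{Tcone} is invoked to conclude that the tangent cones are bi-Lipschitz homeomorphic. Your preliminary normalisations (embedding $\R^k$ as a coordinate subspace of $\R^n$, translating so that $\phi(0)=0$, restricting the globally bi-Lipschitz $d\tilde{\tilde\phi}$ to $LD(A)$) are details the paper leaves implicit, and they are carried out correctly.

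The genuinely valuable part of your proposal is the last paragraph, and your diagnosis there is correct: Theorem \ref{Tcone} requires $(SSP)$ for \emph{both} germs, the corollary as printed imposes no condition on $A$, and the paper's proof applies the theorem without verifying $(SSP)$ for $A$. This hypothesis cannot be obtained for free, since $(SSP)$ is not a bi-Lipschitz invariant (that is precisely the moral of Example \ref{zigzag}); without it, Lemma \ref{mainlemma} applied to $\phi^{-1}$ gives only that $LD(A)$ \emph{contains} a bi-Lipschitz copy of $\R^k$, exactly as you say. One repair to your counterexample: the zigzag of Example \ref{zigzag} is the image of the half-line $\ell$, hence bi-Lipschitz homeomorphic to $[0,\varepsilon)$, which is not open in $\R$; to contradict the literal statement take instead the graph over $(-\varepsilon,\varepsilon)$ of a Lipschitz $f$ that zigzags between $y=0$ and $y=cx$ for $x>0$ and vanishes for $x\le 0$. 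Projection to the $x$-axis shows this germ is bi-Lipschitz homeomorphic to an open interval, yet its tangent cone contains a two-dimensional sector, so it cannot be bi-Lipschitz homeomorphic to $\R$. Thus the corollary is false as literally stated and needs $(SSP)$ for $A$ as an explicit hypothesis (automatic in Sampaio's subanalytic setting); with that hypothesis added, your proof is complete, and it is in fact more carefully argued than the paper's, which assumes the hypothesis silently.
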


\begin{proof} Assume that $A$ is bi-Lipschitz homeomorphic to an open set $U\subset \R^k$. 
Then according to example \ref{SSP} (5), $U$ satisfies condition (SSP) and $LD(U)=\R^k$. 
Therefore by Theorem  \ref{Tcone}, their tangent cones are bi-Lipschitz homeomorphic as well.
\end{proof}

\begin{cor}\label{corollary1}
Let $A$ be a set germ at $0 \in \R^n$ such that $0\in \overline{A\setminus \{ 0 \} }$,
and let $0 \in \R^n$ have a neighbourhood $V$ in $A$ bi-Lipschitz homeomorphic to a cone $LD(C)$.
Then $V$ and $LD(A)$ are bi-Lipschitz homeomorphic as well, in particular $\text{dim} D(A)=\text{dim} A-1$.
\end{cor}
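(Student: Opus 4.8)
The plan is to push the hypothesis through Theorem \ref{Tcone}, exploiting the fact that the target is a cone. The three structural observations I would record first are: (i) a cone equals its own real tangent cone, since $D(LD(C)) = D(C)$ gives
$$
LD(LD(C)) = L(D(LD(C))) = L(D(C)) = LD(C);
$$
(ii) by Example \ref{SSP}(2) the cone $LD(C)$ satisfies condition $(SSP)$; and (iii) because $V$ is a neighbourhood of $0$ in $A$, the germs of $A$ and $V$ at $0$ coincide, so $D(V)=D(A)$ and hence $LD(V) = LD(A)$.

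Next I would apply Theorem \ref{Tcone} to the given bi-Lipschitz homeomorphism $\phi : V \to LD(C)$. That theorem needs condition $(SSP)$ on both sides: it holds on the target by (ii), and the operative point is that it also holds on the domain $V$. Granting this, Theorem \ref{Tcone} yields
$$
d\tilde{\tilde{\phi}}(LD(V)) = LD(LD(C)) = LD(C).
$$
By Remarks \ref{doubling} and \ref{der} the blow-up limit $d\tilde{\tilde{\phi}}$ is itself a global bi-Lipschitz map, so this displays $LD(A)=LD(V)$ as a bi-Lipschitz copy of $LD(C)$. Composing with the hypothesised homeomorphism $V \cong LD(C)$ gives $V \cong LD(A)$, the first assertion.

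For the numerical clause I would argue by invariance of dimension: bi-Lipschitz maps preserve Hausdorff dimension, so $\dim V = \dim LD(A)$. Since $LD(A)=L(D(A))$ is the half-cone over $D(A)\subset S^{n-1}$, we have $\dim LD(A) = \dim D(A) + 1$, while $\dim V = \dim A$ by (iii). Combining, $\dim A = \dim D(A)+1$, that is $\dim D(A) = \dim A - 1$.

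The step I expect to be the main obstacle is exactly the verification that $V$ (equivalently $A$) satisfies condition $(SSP)$, for this is what upgrades Theorem \ref{Tcone} from one inclusion to an equality. It cannot be read off from $V \cong LD(C)$ alone, because $(SSP)$ is \emph{not} a bi-Lipschitz invariant: the Lipschitz zigzag of Example \ref{zigzag} has $\phi(\ell) \cong \ell = LD(\ell)$, so it is bi-Lipschitz homeomorphic to a cone, yet it fails $(SSP)$ and its tangent cone is a full sector rather than a half-line. Without $(SSP)$ on the domain, applying Lemma \ref{mainlemma} to $\phi^{-1}$ (whose source $LD(C)$ is $(SSP)$) only produces a bi-Lipschitz embedding $LD(C) \hookrightarrow LD(A)$; promoting this to a surjection is precisely where the $(SSP)$ hypothesis, via Theorem \ref{Tcone}, is indispensable, and I would make that hypothesis explicit in the statement.
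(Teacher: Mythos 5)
Your positive argument is, in substance, exactly the paper's proof. The paper's entire justification of Corollary \ref{corollary1} reads: ``Any cone has (SSP) and $LD(LD(C))=LD(C)$. Therefore by Theorem \ref{Tcone}, their tangent cones are bi-Lipschitz homeomorphic as well.'' Your observations (i)--(iii), the application of Theorem \ref{Tcone} to $\phi\colon V \to LD(C)$, and the dimension bookkeeping at the end simply make explicit what the paper leaves implicit.

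The more important point is that the obstacle you flagged is not a defect of your write-up but a genuine gap in the paper itself: Theorem \ref{Tcone} requires \emph{both} germs to satisfy $(SSP)$, and the paper's proof verifies this only for the target $LD(C)$, silently assuming it for $V$, while nothing in the hypotheses supplies it. Your counterexample is correct, and it is the paper's own Example \ref{zigzag}: take $f$ a Lipschitz zigzag whose graph oscillates between $\ell$ and $m$ (e.g.\ the similar-triangles curve of Example 3.4 of \cite{kp1}), put $\phi = Y_+(f)$ and $A = V = \phi(\ell)$. Then $V$ is bi-Lipschitz homeomorphic to the cone $\ell = LD(\ell)$ (restrict the global bi-Lipschitz map $\phi$), yet by Example \ref{zigzag} the curve $\phi(\ell)$ fails $(SSP)$ and $LD(A)$ is the two-dimensional sector bounded by $\ell$ and $m$; hence $V$ and $LD(A)$ are not bi-Lipschitz homeomorphic (Hausdorff dimension $1$ versus $2$), and $\dim D(A) = 1 \neq \dim A - 1 = 0$. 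Both conclusions of the corollary fail, so the statement as printed is false, and the repair you propose --- adding the hypothesis that $V$ (equivalently $A$) satisfies $(SSP)$ --- is precisely what is needed; with it, your proof (equivalently the paper's) is complete. Your closing remark is also accurate: applying Lemma \ref{mainlemma} only to $\phi^{-1}$, whose source $LD(C)$ does satisfy $(SSP)$, yields just a bi-Lipschitz embedding of $LD(C)$ into $LD(A)$, which is consistent with the zigzag picture, where the half-line embeds into the sector but is not all of it.
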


\begin{proof}  Any cone has (SSP) and $LD(LD(C))=LD(C)$.
Therefore by Theorem  \ref{Tcone}, their tangent cones are bi-Lipschitz homeomorphic as well.
\end{proof}

\bigskip

We can show also the following lemma in the $(SSP)$ setting.

\begin{lem}
Let $A$, $B \subset \R^n$ be set-germs at $0 \in \R^n$ such that $0 \in \overline{A} \cap \overline{B}$,
and let $h : \R^n\to\R^n$ be a bi-Lipschitz homeomorphism. 
If both $A, B$ satisfy condition $(SSP)$, then
$$
\dim (D(h(A)) \cap D(h(B))) \geq \dim (D(A) \cap D(B)).
$$
\end{lem}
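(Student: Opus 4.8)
The plan is to transfer everything to the tangent cones and then invoke the bi-Lipschitz invariance of dimension. First I would record the elementary cone identity
$$
LD(A)\cap LD(B) = L\bigl(D(A)\cap D(B)\bigr),
$$
which holds because a nonzero point lying in both half-cones $L(D(A))$ and $L(D(B))$ has a single well-defined direction, necessarily belonging to $D(A)\cap D(B)$. Combined with the formula $\dim L(S)=\dim S+1$ for any nonempty $S\subset S^{n-1}$ (polar coordinates identify $L(S)\setminus\{0\}$ with $S\times(0,\infty)$, and adjoining the vertex $0$ does not raise the dimension), this reduces the assertion to the cone inequality
$$
\dim\bigl(LD(h(A))\cap LD(h(B))\bigr)\ \geq\ \dim\bigl(LD(A)\cap LD(B)\bigr).
$$
If $D(A)\cap D(B)=\emptyset$ there is nothing to prove, so I assume it is nonempty.

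Next I would produce the differential $dh$. Since $h:\R^n\to\R^n$ is already global, Remark \ref{der} furnishes, after passing to one subsequence $n_i$, a bi-Lipschitz map $dh=\lim_i n_i h(\tfrac{\cdot}{n_i})$ with the same constants, the convergence being uniform on compacta; crucially this single $dh$, attached to one fixed subsequence, serves $A$ and $B$ simultaneously. Because $A$ satisfies $(SSP)$, the inclusion established in the proof of Lemma \ref{mainlemma} applies verbatim with $h$ in place of $\tilde{\tilde\phi}$ and gives $dh(LD(A))\subset LD(h(A))$; likewise $dh(LD(B))\subset LD(h(B))$ since $B$ satisfies $(SSP)$. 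As $dh$ is injective, intersecting the two inclusions yields
$$
dh\bigl(LD(A)\cap LD(B)\bigr)\ \subset\ LD(h(A))\cap LD(h(B)).
$$

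Finally, $dh$ is bi-Lipschitz and hence preserves Hausdorff dimension, so the left-hand set has the same dimension as $LD(A)\cap LD(B)$. Monotonicity of dimension under inclusion then gives the cone inequality displayed above, and unwinding the reduction of the first paragraph recovers the stated bound $\dim(D(h(A))\cap D(h(B)))\geq\dim(D(A)\cap D(B))$.

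I expect the only genuinely delicate point to be the coherence of $dh$: one must ensure that the \emph{same} subsequential limit is used for both $A$ and $B$, so that the two inclusions $dh(LD(A))\subset LD(h(A))$ and $dh(LD(B))\subset LD(h(B))$ refer to one and the same bi-Lipschitz map. This is automatic once $dh$ is fixed as a function, since the $(SSP)$ argument of Lemma \ref{mainlemma} is carried out pointwise along that fixed subsequence. The remaining ingredients — the cone identity, the dimension formula $\dim L(S)=\dim S+1$, and bi-Lipschitz invariance of dimension — are routine.
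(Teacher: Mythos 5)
Your proposal is correct and is essentially the paper's own argument: the paper's proof consists of citing Lemma \ref{mainlemma} and deferring to \cite{sampaio}, and what you wrote out --- applying the inclusion $dh(LD(A))\subset LD(h(A))$ (and likewise for $B$) with one fixed subsequential limit $dh$, intersecting, and invoking bi-Lipschitz invariance of Hausdorff dimension together with the cone identity $\dim L(S)=\dim S+1$ --- is precisely that deferred argument made explicit. The only cosmetic remark is that injectivity of $dh$ is not needed for the inclusion $dh(LD(A)\cap LD(B))\subset LD(h(A))\cap LD(h(B))$, since $f(X\cap Y)\subset f(X)\cap f(Y)$ holds for any map.
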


\begin{proof}
Having established Lemma \ref{mainlemma}, the proof follows as in \cite{sampaio}.
\end{proof}

As a consequence of the above lemma, we have the generalised result
of Main Theorem in \cite{kp1} to the case in the $(SSP)$ setting.

\begin{thm}\label{dim}
Let $A$, $B \subset \R^n$ be set-germs at $0 \in \R^n$ such that 
$0 \in \overline{A} \cap \overline{B}$, and let 
$h : (\R^n,0) \to (\R^n,0)$ be a bi-Lipschitz homeomorphism.
Suppose that $A, \ B, \ h(A), \ h(B)$ satisfy condition $(SSP)$.
Then we have the equality of dimensions,
$$
\dim (D(h(A)) \cap D(h(B))) = \dim (D(A) \cap D(B)).
$$
\end{thm}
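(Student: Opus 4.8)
The plan is to deduce the equality from the preceding lemma by invoking it twice: once for the pair $(A,B)$ with the homeomorphism $h$, and once for the pair $(h(A),h(B))$ with the inverse homeomorphism $h^{-1}$. The lemma by itself delivers only the inequality $\dim (D(h(A)) \cap D(h(B))) \geq \dim (D(A) \cap D(B))$, so the entire task reduces to producing the reverse inequality from a symmetric application and then comparing the two.

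First I would apply the lemma directly to $A$ and $B$. Since both satisfy condition $(SSP)$ by hypothesis and $h$ is a bi-Lipschitz homeomorphism, the lemma yields
$$
\dim (D(h(A)) \cap D(h(B))) \geq \dim (D(A) \cap D(B)).
$$
Next I would note that $h^{-1} : \R^n \to \R^n$ is again a bi-Lipschitz homeomorphism fixing the origin, and that the set-germs $h(A)$ and $h(B)$ both satisfy condition $(SSP)$ — this is exactly the additional hypothesis in the theorem beyond what is needed for a single application of the lemma. Applying the lemma to the pair $(h(A),h(B))$ with the map $h^{-1}$ gives
$$
\dim (D(h^{-1}(h(A))) \cap D(h^{-1}(h(B)))) \geq \dim (D(h(A)) \cap D(h(B))).
$$
Because $h^{-1}(h(A)) = A$ and $h^{-1}(h(B)) = B$, the left-hand side is just $\dim (D(A) \cap D(B))$, so this reads
$$
\dim (D(A) \cap D(B)) \geq \dim (D(h(A)) \cap D(h(B))).
$$

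Combining the two displayed inequalities forces equality, which completes the argument. There is essentially no analytic obstacle here: the genuine content — the sequence-selection argument (via Lemma \ref{mainlemma} and Theorem \ref{Tcone}) showing that $(SSP)$ sets are transported correctly under the limit map — has already been absorbed into the preceding lemma, following Sampaio's scheme. The only point demanding attention is the bookkeeping role of the hypotheses: the lemma is asymmetric, constraining only the \emph{source} sets to satisfy $(SSP)$, so to run the comparison in both directions one genuinely needs all four of $A$, $B$, $h(A)$, $h(B)$ to satisfy $(SSP)$. This is precisely why the theorem imposes the fourfold assumption rather than merely requiring it of $A$ and $B$, and it is the reason one cannot simply read the equality off from a single invocation.
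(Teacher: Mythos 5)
Your proposal is correct and is exactly the argument the paper intends: the theorem is stated as an immediate consequence of the preceding lemma, obtained by applying that lemma once to $(A,B)$ with $h$ and once to $(h(A),h(B))$ with the bi-Lipschitz inverse $h^{-1}$, the fourfold $(SSP)$ hypothesis being precisely what licenses the second application. Your spelled-out version, including the remark on why all four sets must satisfy $(SSP)$, matches the paper's (unwritten but clearly intended) proof.
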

\begin{defn}
Let $A$, $B \subset \R^n$ be set-germs at $0 \in \R^n$ such that $0 \in \overline{A} \cap \overline{B}$. We say that $A, B$ are {\it transverse} at $0\in \R^n$ if and only if:

$$ \text{dim} LD(A) +\text{dim LD(B)}- \text{dim}(LD(A)\cap LD(B)) =n.$$

\end{defn}

As a corollary of Theorem \ref{dim}, we have the following preserving of transversality result.
\begin{cor}\label{trans}
Let $A$, $B \subset \R^n$ be set-germs at $0 \in \R^n$ such that 
$0 \in \overline{A} \cap \overline{B}$, and let 
$h : (\R^n,0) \to (\R^n,0)$ be a bi-Lipschitz homeomorphism.
Suppose that $A, \ B, \ h(A), \ h(B)$ satisfy condition $(SSP)$. Then $A$ and $B$ are {\it transverse} at $0\in \R^n$ if and only if $h(A)$ and $h(B)$ are {\it transverse} at $h(0)=0\in \R^n.$

\end{cor}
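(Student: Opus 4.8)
The plan is to reduce the statement to Theorem~\ref{dim} by means of the elementary bookkeeping that relates the dimension of a direction set to the dimension of its cone. First I would record two facts about cones. For a set-germ $A$ with $0 \in \overline{A \setminus \{0\}}$ one has $\dim LD(A) = \dim D(A) + 1$, since $LD(A) = L(D(A))$ is the half-cone over the subset $D(A) \subset S^{n-1}$ and coning with vertex $0$ raises dimension by one. Moreover, for two such germs $A, B$ one has the identity
$$
LD(A) \cap LD(B) = L(D(A) \cap D(B)),
$$
because any nonzero point $x$ of the left-hand side satisfies $x = ta = sb$ with $a \in D(A)$, $b \in D(B)$, $t,s \ge 0$, forcing $a = b = x/\|x\| \in D(A) \cap D(B)$. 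Consequently $\dim(LD(A) \cap LD(B)) = \dim(D(A) \cap D(B)) + 1$.

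These two observations let me rewrite the transversality defect
$$
\delta(A,B) := \dim LD(A) + \dim LD(B) - \dim(LD(A) \cap LD(B))
$$
entirely in terms of direction sets as
$$
\delta(A,B) = \dim D(A) + \dim D(B) - \dim(D(A) \cap D(B)) + 1.
$$
Thus $A$ and $B$ are transverse at $0$ exactly when the right-hand side equals $n$.

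Next I would show that each of the three summands is preserved by $h$. Applying Theorem~\ref{dim} to the pair $(A,A)$ — which is legitimate since $A$ and $h(A)$ satisfy condition $(SSP)$ by hypothesis — yields $\dim D(h(A)) = \dim D(A)$; the same argument with $(B,B)$ gives $\dim D(h(B)) = \dim D(B)$. Applying Theorem~\ref{dim} to the pair $(A,B)$ directly gives $\dim(D(h(A)) \cap D(h(B))) = \dim(D(A) \cap D(B))$. Substituting these three equalities into the displayed formula for $\delta$ shows $\delta(h(A), h(B)) = \delta(A,B)$. Since transversality of $A,B$ means $\delta(A,B) = n$ and transversality of $h(A), h(B)$ means $\delta(h(A), h(B)) = n$, the two conditions are equivalent, which is the assertion.

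The main obstacle is the cone–direction-set dimension bookkeeping rather than any analysis: one must pin down the identity $LD(A) \cap LD(B) = L(D(A) \cap D(B))$ and the relation $\dim LD(A) = \dim D(A) + 1$, taking care of degenerate configurations (an empty direction set, or a $D(A)$ containing antipodal pairs so that the half-cone is a genuine linear subspace). Once this is settled the corollary is a direct substitution, as all the genuine content has already been absorbed into Lemma~\ref{mainlemma} and Theorem~\ref{dim}.
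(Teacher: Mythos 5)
Your proposal is correct and takes essentially the route the paper intends: the paper states Corollary~\ref{trans} as a direct consequence of Theorem~\ref{dim} with no further argument, and your derivation --- applying Theorem~\ref{dim} to the pairs $(A,A)$, $(B,B)$ and $(A,B)$, plus the bookkeeping $\dim LD(\cdot)=\dim D(\cdot)+1$ and $\dim\bigl(LD(A)\cap LD(B)\bigr)=\dim\bigl(D(A)\cap D(B)\bigr)+1$ --- is exactly the fleshed-out version of that reduction. The degenerate configurations you flag (empty direction sets, or $D(A)\cap D(B)=\emptyset$, where $LD(A)\cap LD(B)=\{0\}$ rather than $L(D(A)\cap D(B))=\emptyset$) are reconciled by the convention $\dim\emptyset=-1$ and are passed over in silence by the paper as well.
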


On the other hand in \cite{kp3} we introduced a notion of weak transversality and showed in Theorem 3.5 that weak transversality is preserved under rather mild assumptions. We are going to recall the result for reader convenience.

\begin{defn}
Let $A$, $B \subset \R^n$ be set-germs at $0 \in \R^n$ such that $0 \in \overline{A} \cap \overline{B}$. We say that $A, B$ are {\it weakly transverse} at $0\in \R^n$ if and only if
$ D(A) \cap D(B)=\emptyset$.
\end{defn}

\begin{thm}
Let $A$, $B$ be two set-germs at $0 \in \R^n$
such that $0 \in \overline{A} \cap \overline{B}$,
and let $h : (\R^n,0) \to (\R^n,0)$ be a bi-Lipschitz homeomorphism.
Suppose that $A$ or $B$ satisfies condition $(SSP)$,
and $h(A)$ or $h(B)$ satisfies condition $(SSP)$.
Then $A$ and $B$ are weakly transverse at $0 \in \R^n$ if and only 
if $h(A)$ and $h(B)$ are weakly transverse at $0 \in \R^n$.
\end{thm}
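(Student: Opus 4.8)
The plan is to prove the equivalence through its contrapositive, establishing that a common direction of the source sets forces a common direction of their images. Concretely, I would show: if $A$ or $B$ satisfies $(SSP)$ and $D(A)\cap D(B)\neq\emptyset$, then $D(h(A))\cap D(h(B))\neq\emptyset$. This is exactly the contrapositive of the implication ``$h(A),h(B)$ weakly transverse $\Rightarrow A,B$ weakly transverse,'' and it consumes the hypothesis that $A$ or $B$ satisfies $(SSP)$. The reverse implication then follows by applying the very same argument to the homeomorphism $h^{-1}$ together with the pair $h(A),h(B)$, for which the relevant hypothesis is that $h(A)$ or $h(B)$ satisfies $(SSP)$; here one uses $h^{-1}(h(A))=A$ and $h^{-1}(h(B))=B$. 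Thus the two $(SSP)$ assumptions are precisely what the two directions require.

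To carry out the core step, assume without loss of generality that $A$ satisfies $(SSP)$ and fix a common direction $v\in D(A)\cap D(B)$. First I would select a sequence $y_i\in B$ with $y_i\to 0$ and $y_i/\|y_i\|\to v$, which exists because $v\in D(B)$. Since $v\in D(A)$, the defining property of condition $(SSP)$ applied to the sequence $\{y_i\}$ produces points $a_i\in A$ with $\|y_i-a_i\|\ll\|y_i\|$; in particular $\|a_i\|\approx\|y_i\|\to 0$, so $a_i\to 0$ as well.

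Next I would push everything forward by $h$ and exploit the bi-Lipschitz bounds with a constant $L\ge 1$. From $\|h(y_i)-h(a_i)\|\le L\|y_i-a_i\|=\smallo(\|y_i\|)$ together with the lower bound $\|h(y_i)\|\ge\|y_i\|/L$, I obtain $\|h(y_i)-h(a_i)\|=\smallo(\|h(y_i)\|)$. Passing to a subsequence so that $h(y_i)/\|h(y_i)\|\to w$ for some $w\in S^{n-1}$, the membership $h(y_i)\in h(B)$ gives $w\in D(h(B))$; and since the perturbation is negligible relative to $\|h(y_i)\|$, the rays $h(a_i)/\|h(a_i)\|$ converge to the same $w$, whence $w\in D(h(A))$ because $h(a_i)\in h(A)$ and $a_i\to 0$. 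Therefore $w\in D(h(A))\cap D(h(B))$, so the images fail to be weakly transverse, as required.

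The only delicate point, and the step I expect to be the main obstacle, is the passage from the source estimate $\|y_i-a_i\|=\smallo(\|y_i\|)$ to the target estimate $\|h(y_i)-h(a_i)\|=\smallo(\|h(y_i)\|)$: this is precisely where the bi-Lipschitz \emph{lower} bound is indispensable, since it is what guarantees that the two nearby sequences in $h(A)$ and $h(B)$ share a single limiting direction $w$ rather than merely lying close together. Everything else---extracting a convergent subsequence on the compact sphere $S^{n-1}$ and the elementary normalisation showing $h(a_i)/\|h(a_i)\|$ and $h(y_i)/\|h(y_i)\|$ have the same limit---is routine, and notably this argument bypasses the $d h$ machinery entirely, relying only on the definition of $(SSP)$ and the bi-Lipschitz inequalities.
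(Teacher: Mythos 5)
Your proof is correct; the main thing to note is that this note does not actually prove the statement---it is recalled ``for reader convenience'' from \cite{kp3} (Theorem 3.5 there), so the comparison can only be with the machinery the note uses for its neighbouring results. Those results (Lemma \ref{mainlemma}, Theorem \ref{Tcone}, Theorem \ref{dim}, Corollary \ref{trans}) all route through the doubling process of Remark \ref{doubling} and Sampaio's blow-up limit $d\phi$ obtained via Arzela--Ascoli (Remark \ref{der}). Your argument bypasses that machinery entirely, using only the definition of $(SSP)$, the bi-Lipschitz inequalities together with $h(0)=0$, and compactness of $S^{n-1}$; this is more elementary and is also better matched to the hypotheses: the $dh$-based results in the note need all four germs $A$, $B$, $h(A)$, $h(B)$ to satisfy $(SSP)$, whereas your split into two contrapositive implications---each consuming exactly one of the two disjunctive hypotheses, the second obtained by running the same argument for $h^{-1}$ on the pair $(h(A),h(B))$---explains why weak transversality survives under the weaker ``or'' assumptions. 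Your key estimate is sound: $\|h(y_i)-h(a_i)\|\le L\|y_i-a_i\|=\smallo(\|y_i\|)$ and $\|y_i\|\le L\|h(y_i)\|$ (which uses $h(0)=0$) give $\|h(y_i)-h(a_i)\|=\smallo(\|h(y_i)\|)$, and the fact that two sequences whose gap is negligible relative to their norms share a limiting direction is routine. The only detail you should make explicit is that $a_i\neq 0$, hence $h(a_i)\neq 0$ by injectivity, for large $i$ (the definition of $D(h(A))$ requires points of $h(A)\setminus\{0\}$); this follows from $\|a_i\|\ge \|y_i\|-\|y_i-a_i\|=\|y_i\|\bigl(1-\smallo(1)\bigr)>0$.
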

\bigskip


\begin{thebibliography}{99}

\bibitem{banach} S. Banach,
\emph{Wstep do teorii funkcji rzeczywistych},
Monografie matematyczne, Warszawa-Wroclaw, 1951.

\bibitem{coste} M. Coste,
\emph{An introduction to $o$-minimal geometry},
Dottorato di Ricerca in Matematica, Dip. Mat. Pisa.
Instituti Editoriali e Poligrafici Internazionali, 2000.

\bibitem{dries} L. van den Dries,
\emph{Tame topology and $o$-minimal structures}, 
LMS Lecture Notes Series \textbf{248},
Cambridge University Press, 1997.

\bibitem{hironaka} H. Hironaka,
\emph{Subanalytic sets}, Number Theory, Algebraic Geometry
and Commutative Algebra, in honor of Yasuo Akizuki,
pp. 453--493, Kinokuniya, Tokyo, 1973.

\bibitem{kp1} S. Koike and L. Paunescu,
\emph{The directional dimension of subanalytic sets is invariant
under bi-Lipschitz homeomorphisms},
Annales de l'Institut Fourier \textbf{59} (2009), 2448--2467.

\bibitem{kp2} S. Koike, Ta L\^e Loi,
L. Paunescu and M. Shiota,
\emph{Directional properties of sets definable in o-minimal structures},
Annales de l'Institut Fourier \textbf{63} (2013), 2017--2047.

\bibitem{kp3} S. Koike and L.Paunescu,
\emph{On the geometry of sets satisfying the sequence selection property}, 
J. Math. Soc. Japan \textbf{67} (2015), 721--751.

\bibitem{kp4} S. Koike and L.Paunescu,
\emph{(SSP) geometry with directional homeomorphisms}, 
J. Singularity \textbf{13} (2015), 169--178.

\bibitem{sampaio} J. Edson Sampaio,
\emph{Bi-Lipschitz homeomorphic subanalytic sets have
bi-Lipschitz homeomorphic tangent cones},
Sel. Math. New Ser., First online: 03 September 2015.
\bibitem{Wh} H. Whitney,
\emph{Analytic extensions of functions defined in closed sets}, Trans. Amer. Math. Soc. {\bf 36} (1934), 63-89.

\bibitem{zariski} O. Zariski, 
\emph{Some open questions in the theory of singularities},
 Bull. Amer. Math. Soc. \textbf{77} (1971), 481--491.

\end{thebibliography}
\end{document}